\documentclass[11pt]{amsart}
\usepackage[T1]{fontenc}
\usepackage[utf8]{inputenc}
\usepackage{amsmath,amssymb,amsthm}
\usepackage{mathtools}
\usepackage{booktabs}
\usepackage[margin=1.1in]{geometry}
\usepackage{hyperref}

\DeclareMathOperator{\rad}{rad}

\theoremstyle{plain}
\newtheorem{theorem}{Theorem}[section]
\newtheorem{proposition}[theorem]{Proposition}
\newtheorem{lemma}[theorem]{Lemma}
\newtheorem{corollary}[theorem]{Corollary}
\newtheorem{conjecture}[theorem]{Conjecture}

\theoremstyle{definition}
\newtheorem{definition}[theorem]{Definition}

\newtheorem{question}[theorem]{Question}

\theoremstyle{remark}
\newtheorem{remark}[theorem]{Remark}

\numberwithin{equation}{section}

\begin{document}

\title{Weak Hall's conjecture, approximation gains, and continued fractions}

\author{R. Laniewski}
\author{K. M\"uller}

\date{September 28, 2026}

\subjclass[2020]{Primary 11D25; Secondary 11D75, 11J70}

\keywords{Hall's conjecture, Mordell equation, $abc$ conjecture, approximation gain, power gain, continued fractions}

\begin{abstract}\label{abs:hall}
For an integer solution of $y^2=x^3+k$ with $x,y\ge 1$ and $k\ne 0$, the integers $x^3$, $|k|$ and $y^2$ form a triple in which one term is the sum of the other two. When $\gcd(x,y)=1$ this triple is coprime, and the $abc$ conjecture predicts that its quality is asymptotically at most $1$. Following M\"uller, Taktikos and de Weger, we factor this quality through the product $P=xy|k|$ into the approximation gain $G_a=\log\max(x^3,y^2)/\log P$ and the power gain $G_p=\log P/\log\rad(xy|k|)$, which is at least $1$. We show that the weak form of Hall's conjecture, which asks that $|k|>x^{1/2-\delta}$ for every $\delta>0$ outside finitely many solutions, is equivalent to the asymptotic bound $G_a\le 1$. More generally, for $6/11\le\kappa<6/5$, an asymptotic bound $G_a\le\kappa$ gives $|k|>x^{3/\kappa-5/2-\delta}$ for every $\delta>0$ outside finitely many solutions, and for $3/4\le\kappa<6/5$ the two statements are equivalent. Since the approximation gain never exceeds the quality, an $abc$ inequality with exponent $1\le\kappa<6/5$ gives the same bound for primitive solutions. The case $\kappa=1$ extends to all solutions and recovers the classical implication from the $abc$ conjecture to weak Hall. We also relate small values of $|k|$ to large partial quotients of $\sqrt x$.
\end{abstract}

\maketitle

\section{Introduction}\label{sec:intro}

Hall's conjecture concerns the integer solutions of the Mordell equation $y^2=x^3+k$ with $k\ne 0$. Its original form asks for a constant $C>0$ with $|k|>C\sqrt x$ for all solutions~\cite{Hall1971}, and Danilov exhibited infinitely many solutions with $|k|<0.97\sqrt x$~\cite{Danilov1982}. The weak form allows a loss of $x^{\delta}$.

\begin{conjecture}[Weak Hall]\label{conj:weakhall}
For every $\delta>0$, there are only finitely many solutions $(x,y,k)$ of $y^2=x^3+k$ with $x,y\ge 1$ and $0<|k|<x^{1/2-\delta}$.
\end{conjecture}

Conjecture~\ref{conj:weakhall} is equivalent to the more familiar formulation asking, for every $\delta>0$, for a constant $C(\delta)>0$ with $|k|\ge C(\delta)x^{1/2-\delta}$ for all solutions. Indeed, the finitely many exceptions allowed by the conjecture can be absorbed into the constant. Conversely, the bound $|k|\ge C(\delta/2)x^{1/2-\delta/2}$ gives $|k|\ge x^{1/2-\delta}$ as soon as $x^{\delta/2}\ge 1/C(\delta/2)$, and only finitely many solutions with $|k|<x^{1/2-\delta}$ have $x$ below this bound.

The implication from the $abc$ conjecture to Conjecture~\ref{conj:weakhall} is classical (see~\cite{Elkies2000}). M\"uller and Taktikos introduced a factorization of the quality of an $abc$-triple into an approximation gain and a power gain~\cite{MullerTaktikos2026}, and de Weger developed variants for all $abc$-triples~\cite{deWeger2026}. Following this principle, we choose the product $P=xy|k|$, adapted to the presentation of the Mordell equation as a difference of a square and a cube. The present note studies the gains defined by this choice and their relation to Hall's problem.

Theorem~\ref{thm:hallgain} shows that Conjecture~\ref{conj:weakhall} is equivalent to the statement that only finitely many solutions have approximation gain $G_a=\log\max(x^3,y^2)/\log P$ above $1+\varepsilon$. Proposition~\ref{prop:kappa} extends one direction to every level $\kappa$ in $[6/11,6/5)$, with the Hall exponent $3/\kappa-5/2$, and both directions to the levels $\kappa\ge 3/4$, a range that Remark~\ref{rem:threequarters} shows to be optimal. Since the approximation gain never exceeds the quality, an $abc$ inequality of exponent $\kappa$ gives this exponent at once (Corollary~\ref{cor:abcexponent}). The case $\kappa=1$ is the classical implication. For $\kappa\ge 6/5$ the exponent $3/\kappa-5/2$ is non-positive, and Remark~\ref{rem:szpiro} places the known consequences of Szpiro's conjecture for the quality with respect to this range. Remark~\ref{rem:normalized} compares the gains of $P$ with those of the product $x^2y|k|$ arising in the construction of M\"uller and Taktikos.

Section~\ref{sec:cf} relates small residuals to continued fractions. If $|k|$ is below roughly $\sqrt x$ and $\gcd(x,y)=1$, then $y/x$ is a convergent of $\sqrt x$, and $\sqrt x/|k|$ lies between $a/2$ and $(a+2)/2$ up to a factor tending to $1$, where $a$ is the next partial quotient. Section~\ref{sec:questions} states one question.

\section{Solutions and their gains}\label{sec:gains}

\begin{definition}\label{def:mordell}
A \emph{solution} is a triple $(x,y,k)$ of integers with $y^2=x^3+k$, $x\ge 1$, $y\ge 1$ and $k\ne 0$. It is \emph{primitive} when $\gcd(x,y)=1$. For a solution, put
\[
M=\max\{x^3,y^2\},\qquad P=xy|k|,\qquad N=\rad(xy|k|)
\]
and define the \emph{approximation gain} and the \emph{power gain}
\[
G_a=\frac{\log M}{\log P},\qquad G_p=\frac{\log P}{\log N}.
\]
When $x\ge 2$, let $c=\log|k|/\log x$ denote the \emph{Hall exponent} of the solution.
\end{definition}

The integer $P$ is at least $2$, since $x=y=1$ would force $k=0$. Likewise $N\ge 2$, since $N=1$ would force $x=y=|k|=1$, which contradicts $k\ne0$. Both gains are therefore well defined.

\begin{lemma}\label{lem:mordelltriple}
Let $(x,y,k)$ be a primitive solution. Then $x$, $y$ and $k$ are pairwise coprime, and the integers $x^3$, $|k|$ and $y^2$ form a coprime triple, one of which is the sum of the other two, with largest term $M$ and radical $N$. Its quality
\[
X=\frac{\log M}{\log N}
\]
satisfies
\begin{equation}\label{eq:product}
X=G_aG_p,\qquad G_p\ge 1,\qquad G_a\le X.
\end{equation}
\end{lemma}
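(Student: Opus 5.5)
The proof is elementary, and I would carry it out in three steps.

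\emph{Step 1: pairwise coprimality.} Let $p$ be a prime dividing two of $x,y,k$. If $p\mid x$ and $p\mid y$, this contradicts $\gcd(x,y)=1$. If $p\mid x$ and $p\mid k$, then $p\mid x^3+k=y^2$, so $p\mid y$, which is again a contradiction. If $p\mid y$ and $p\mid k$, then $p\mid y^2-k=x^3$, so $p\mid x$, the same contradiction. Hence $x,y,k$ are pairwise coprime. Since taking powers does not change the set of prime divisors, $x^3$, $|k|$ and $y^2$ are pairwise coprime as well.

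\emph{Step 2: the triple, its largest term and its radical.} If $k>0$, the relation reads $x^3+|k|=y^2$, and the largest term is $y^2=M$. If $k<0$, it reads $y^2+|k|=x^3$, and the largest term is $x^3=M$. In either case the largest term equals $M=\max\{x^3,y^2\}$, since it is the sum of the two other positive terms and so exceeds each of them. For the radical, the three terms are coprime and share the prime support of $x$, $y$ and $|k|$, so
\[
\rad(x^3\,y^2\,|k|)=\rad(xy|k|)=N.
\]

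\emph{Step 3: the identity \eqref{eq:product}.} The factorization follows by writing
\[
\frac{\log M}{\log N}=\frac{\log M}{\log P}\cdot\frac{\log P}{\log N}.
\]
Both logarithms in the denominators are positive because $P,N\ge 2$, as noted after Definition~\ref{def:mordell}. Next, $N=\rad(P)$ divides $P$, so $1<N\le P$, and therefore $\log P\ge\log N>0$, which gives $G_p\ge 1$. Finally, $X$ is positive because $M\ge 2$. Indeed, $M=1$ would force $x=y=1$ and hence $k=0$, so $\log M>0$ and $G_a>0$. It follows that $G_a=X/G_p\le X$.

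Nothing here is a real obstacle. The only point needing care is checking that every quantity is positive, so that dividing by $G_p$ preserves the inequality; the facts $P,N\ge 2$ and $M\ge 2$ established above handle this.
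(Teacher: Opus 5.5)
Your proof is correct and follows the paper's argument essentially step for step. You use the same prime-divisibility check for pairwise coprimality, the same case split on the sign of $k$ to identify $M$, and $N=\rad(P)\mid P$ to get $G_p\ge 1$ and then $G_a\le X$, adding only the explicit check $M\ge 2$ (so $X>0$) that the paper leaves implicit.
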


\begin{proof}
A prime dividing $x$ and $k$ divides $y^2$, and a prime dividing $y$ and $k$ divides $x^3$, so the three integers are pairwise coprime. If $k>0$, then $y^2=x^3+k$ exceeds $x^3$ and $k$, and if $k<0$, then $x^3=y^2+|k|$ exceeds $y^2$ and $|k|$. The radical of $x^3y^2|k|$ is $N$. The product formula follows by cancelling $\log P$. Since $P$ is divisible by each prime factor of $xyk$, one has $N\le P$, whence $G_p\ge 1$ and $G_a\le X$.
\end{proof}

The product $P$ divides $x^3y^2|k|$ and has the same prime factors. These properties give the factorization~\eqref{eq:product}, in the spirit of~\cite{MullerTaktikos2026,deWeger2026}. Here the gains depend on the chosen presentation through $x^3$, $y^2$ and $|k|$, and need not coincide with the gains defined in those works. Other products with the same two properties give other pairs of gains with product $X$, as in Remark~\ref{rem:normalized}.

\section{Weak Hall as a bound on the approximation gain}\label{sec:hallgain}

\begin{theorem}\label{thm:hallgain}
Let $\mathcal S$ be a set of solutions. The following statements are equivalent.
\begin{enumerate}
\item[\textup{(i)}] For every $\delta\in(0,1/2)$, only finitely many solutions in $\mathcal S$ satisfy $|k|<x^{1/2-\delta}$.
\item[\textup{(ii)}] For every $\varepsilon>0$, only finitely many solutions in $\mathcal S$ satisfy $G_a>1+\varepsilon$.
\end{enumerate}
In particular, Conjecture~\ref{conj:weakhall} holds if and only if \textup{(ii)} holds for the set of all solutions.
\end{theorem}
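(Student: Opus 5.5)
The plan is to show that, on the region where either condition can fail, $G_a$ is essentially the explicit function $3/(5/2+c)$ of the Hall exponent $c=\log|k|/\log x$. Then $G_a>1+\varepsilon$ corresponds to $c<1/2-\delta$ with $\delta$ and $\varepsilon$ related continuously. The two directions then reduce to bookkeeping with $O(1/\log x)$ error terms, together with the remark that each relevant family is finite once $x$ is bounded.

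\emph{Step 1: a priori shape of solutions with $G_a>1$.} Suppose $G_a>1$, that is, $M>P=xy|k|$.
\begin{itemize}
\item If $k>0$, then $M=y^2$, so $y>xk$. Hence $y^2=x^3+k<x^3+y/x$, which forces $y\le x^{3/2}+1$, and $k<y/x\le 2x^{1/2}$.
\item If $k<0$, then $M=x^3$, so $y|k|<x^2$. Hence $y^2=x^3-|k|>x^3-x^2/y$, which for $x\ge 2$ gives $y\ge x^{3/2}/2$, and $|k|<x^2/y\le 2x^{1/2}$.
\end{itemize}
In both cases $x^{3/2}/2\le y\le 2x^{3/2}$ and $|k|<2x^{1/2}$. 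The same bounds on $y$ hold whenever $|k|<x^{1/2}$, directly from $y^2=x^3+k$. In particular, for each fixed $x$ only finitely many solutions have $G_a>1$ or $|k|<x^{1/2}$. So in either statement, an infinite family of exceptional solutions must have $x\to\infty$.

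\emph{Step 2: asymptotics.} On the region $y\asymp x^{3/2}$ (and $x\ge2$) we have
\[
\log M=3\log x+O(1),\qquad \log P=\bigl(\tfrac52+c\bigr)\log x+O(1),
\]
with absolute implied constants. Hence
\[
G_a=\frac{3+O(1/\log x)}{5/2+c+O(1/\log x)}.
\]

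\emph{Step 3: the two directions.}
\begin{itemize}
\item (ii)$\Rightarrow$(i). Suppose infinitely many solutions in $\mathcal S$ had $|k|<x^{1/2-\delta}$. Then $x\to\infty$ along them by Step 1, and $c<1/2-\delta$. So $G_a\ge 3/(3-\delta)-o(1)$, which exceeds $1+\varepsilon$ for $\varepsilon=\delta/6$, contradicting (ii).
\item (i)$\Rightarrow$(ii). Suppose infinitely many solutions had $G_a>1+\varepsilon$. By Step 1 they lie in the region of Step 2 and have $x\to\infty$. Given any $\delta$, (i) gives $c\ge 1/2-\delta$ for all but finitely many of them. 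Then $G_a\le 3/(3-\delta)+o(1)$, and choosing $\delta$ small in terms of $\varepsilon$ gives a contradiction.
\end{itemize}
The final sentence of the theorem follows by taking $\mathcal S$ to be all solutions, using the equivalence of Conjecture~\ref{conj:weakhall} with (i) (restricting to $\delta<1/2$ is harmless).

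I expect the main obstacle to be Step 1. One must make sure that solutions with large $|k|$, for instance $y$ huge and $k\approx y^2$, cannot have $G_a>1$, and control the $k<0$ case, where $y$ could a priori be much smaller than $x^{3/2}$. The inequality $M>xy|k|$ handles both cases as above, but the constants and the small cases $x=1$ and $x=2$ need care so that the $O(1)$ terms in Step 2 are genuinely uniform.
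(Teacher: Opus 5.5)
Your proof is correct and follows the paper's route. The paper proves the theorem as the case $\kappa=1$ of Proposition~\ref{prop:kappa}: part (a) is your (ii)$\Rightarrow$(i) comparison of $P\le\sqrt2\,x^{3-\delta}$ with $M\ge x^3$, and part (b) is your (i)$\Rightarrow$(ii) comparison of $\log M\approx 3\log x$ with $\log P\approx(5/2+c)\log x$. The only difference is the localization step: you use $M>P$ directly to get $|k|<2x^{1/2}$ and $y\asymp x^{3/2}$, whereas the paper discards the solutions with $|k|\ge M/2$ by showing their approximation gain is asymptotically at most $3/4$, which lets its argument cover every level $\kappa\ge 3/4$ and not just $\kappa=1$.
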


\begin{proof}
Since $\gamma(1)=1/2$ in the notation of~\eqref{eq:gamma} below, the two implications are the case $\kappa=1$ of the two parts of Proposition~\ref{prop:kappa}.
\end{proof}

For $\kappa$ in the interval $[6/11,6/5)$, put
\begin{equation}\label{eq:gamma}
\gamma(\kappa)=\frac{3}{\kappa}-\frac52
\end{equation}
so that $\gamma$ decreases on this interval from $\gamma(6/11)=3$ and tends to $0$ as $\kappa$ tends to $6/5$, with $\gamma(1)=1/2$. Every solution satisfies $x\le M^{1/3}$, $y\le M^{1/2}$ and $|k|<M$, hence $P<M^{11/6}$ and
\begin{equation}\label{eq:gain-floor}
G_a>\frac{6}{11}.
\end{equation}
For a level $\kappa<6/11$, the hypothesis of Proposition~\ref{prop:kappa}\textup{(a)} would therefore hold only for finite sets $\mathcal S$.

\begin{proposition}\label{prop:kappa}
Let $\mathcal S$ be a set of solutions.
\begin{enumerate}
\item[\textup{(a)}] Let $6/11\le\kappa<6/5$. Assume that, for every $\varepsilon>0$, only finitely many solutions in $\mathcal S$ satisfy $G_a>\kappa+\varepsilon$. Then, for every $\delta\in(0,\gamma(\kappa))$, only finitely many solutions in $\mathcal S$ satisfy $|k|<x^{\gamma(\kappa)-\delta}$.
\item[\textup{(b)}] Let $3/4\le\kappa<6/5$. Conversely, assume that, for every $\delta\in(0,\gamma(\kappa))$, only finitely many solutions in $\mathcal S$ satisfy $|k|<x^{\gamma(\kappa)-\delta}$. Then, for every $\varepsilon>0$, only finitely many solutions in $\mathcal S$ satisfy $G_a>\kappa+\varepsilon$.
\end{enumerate}
\end{proposition}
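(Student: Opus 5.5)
The plan is to compare $G_a$ with the Hall exponent $c=\log|k|/\log x$ directly. When $|k|$ is small compared with $x^3$, one has $y\asymp x^{3/2}$ and $M\asymp x^3$. Then $\log P=(5/2+c)\log x+O(1)$, so $G_a$ is close to $3/(5/2+c)$. Since $\gamma(\kappa)=3/\kappa-5/2$ is exactly the value of $c$ with $3/(5/2+c)=\kappa$, the condition $G_a>\kappa$ corresponds to $c<\gamma(\kappa)$. The work lies in controlling the $O(1)$ terms and the regimes where $|k|$ is comparable to or larger than $x^3$.

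For (a), suppose infinitely many solutions in $\mathcal S$ satisfy $|k|<x^{\gamma-\delta}$, with $\gamma=\gamma(\kappa)$.

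\begin{itemize}
\item For fixed $x$ there are finitely many admissible $k$, and each determines $y$. Hence $x\to\infty$ along these solutions, and we may assume $x\ge2$.
\item Since $\gamma\le 3$, we have $|k|<x^{3-\delta}$. Therefore $y^2=x^3+k$ lies between $x^3(1-x^{-\delta})$ and $x^3(1+x^{-\delta})$.
\item This gives $M\ge x^3(1-x^{-\delta})$ and $\log y=\tfrac32\log x+o(1)$.
\item It follows that $\log P\le(5/2+\gamma-\delta)\log x+o(1)$, and
\[
G_a\ge \frac{3\log x+o(1)}{(5/2+\gamma-\delta)\log x+o(1)}\longrightarrow \frac{3}{5/2+\gamma-\delta}>\kappa .
\]
\end{itemize}

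Choosing $\varepsilon>0$ with $3/(5/2+\gamma-\delta)>\kappa+2\varepsilon$ yields infinitely many solutions with $G_a>\kappa+\varepsilon$, contradicting the hypothesis. Only the inequalities $\gamma\le3$ and $\gamma>0$ are used, which is why the whole range $[6/11,6/5)$ is allowed.

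For (b), suppose infinitely many solutions in $\mathcal S$ have $G_a>\kappa+\varepsilon$. Since $M\ge\max(x^3,y^2,|k|)$, there are finitely many solutions for each value of $M$, so $M\to\infty$. Fix $\delta>0$ small enough that $3/(5/2+\gamma-\delta)<\kappa+\varepsilon/2$. By hypothesis, all but finitely many of these solutions satisfy $|k|\ge x^{\gamma-\delta}$. I split into cases according to the size of $|k|$ relative to $x^3$.

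\begin{itemize}
\item \emph{Case $|k|\le x^3/2$.} Here $y^2\ge x^3/2$ and $M\le 2x^3$, so $x\to\infty$. Then $\log P\ge(5/2+\gamma-\delta)\log x-O(1)$, which gives $G_a\le 3/(5/2+\gamma-\delta)+o(1)<\kappa+\varepsilon$ eventually.
\item \emph{Case $k<0$ and $|k|>x^3/2$.} Here $M=x^3$ and $P\ge x|k|>x^4/2$. Thus $G_a\le 3/4+o(1)\le\kappa+o(1)$, and again $x\to\infty$ because $M\to\infty$. This is exactly where the restriction $\kappa\ge3/4$ enters, consistent with Remark~\ref{rem:threequarters}.
\item \emph{Case $k>x^3/2$.} Here $M=y^2<3k$ and $y^2>k$, so $P\ge yk>k^{3/2}$ and $G_a\le 2/3+o(1)<\kappa$.
\end{itemize}

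Each case contradicts $G_a>\kappa+\varepsilon$ for all but finitely many solutions. The step I expect to need the most care is this uniform case split in (b). The $o(1)$ terms must be measured against $\log M$ or $\log x$ consistently, and each case must verify that the relevant quantity ($x$ or $k$) tends to infinity. The regime of negative $k$ with $y$ small is the delicate one, and it is the one that forces $\kappa\ge 3/4$.
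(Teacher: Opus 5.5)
Your proof is correct and follows the paper's argument. Part (a) is the same as the paper's, and in part (b) you use the same two regimes: large $|k|$, where $G_a$ is at most about $3/4$ (this is where $\kappa\ge 3/4$ enters), and $|k|$ small relative to $x^3$, where $G_a\approx 3/(5/2+c)$. The differences are cosmetic. The paper splits at $|k|\ge M/2$ without separating signs, and in the main case derives $|k|<3x^{\gamma(\kappa+\varepsilon)}$ from $G_a>\kappa+\varepsilon$ before choosing $\delta$; you fix $\delta$ first (implicitly in $(0,\gamma(\kappa))$) and use the hypothesis to bound $G_a$ from above.
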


\begin{proof}
For (a), write $\gamma=\gamma(\kappa)$ and assume that infinitely many solutions in $\mathcal S$ satisfy $|k|<x^{\gamma-\delta}$. For a fixed $x$, only finitely many $y$ satisfy $|y^2-x^3|<x^{\gamma}$, so $x$ tends to infinity along a subsequence of these solutions. Since $\gamma-\delta<3$, such a solution has $|k|<x^3$, hence $y^2\le x^3+|k|\le 2x^3$ and $P\le\sqrt2\,x^{5/2+\gamma-\delta}$, while $M\ge x^3$. Therefore
\[
G_a\ge\frac{3\log x}{(5/2+\gamma-\delta)\log x+\log\sqrt2}
\]
and the right-hand side tends to $3/(5/2+\gamma-\delta)$ as $x\to\infty$, which exceeds $3/(5/2+\gamma)=\kappa$. The assumption therefore fails for $\varepsilon=\frac12\bigl(3/(5/2+\gamma-\delta)-\kappa\bigr)$.

For (b), assume that infinitely many solutions in $\mathcal S$ satisfy $G_a>\kappa+\varepsilon$. Replacing $\varepsilon$ by a smaller positive number, we may assume $\kappa+\varepsilon<6/5$. If $|k|\ge M/2$, then $P\ge x|k|\ge M^{4/3}/2$ when $M=x^3$, and $P\ge y|k|\ge M^{3/2}/2\ge M^{4/3}/2$ when $M=y^2$. In both cases
\[
G_a\le\frac{\log M}{\frac43\log M-\log 2}
\]
and the right-hand side tends to $3/4<\kappa+\varepsilon$ as $M\to\infty$. Only finitely many solutions have bounded $M$, so all but finitely many of the solutions with $G_a>\kappa+\varepsilon$ satisfy $|k|<M/2$. For these, $\min\{x^3,y^2\}=M-|k|>M/2$, so that $x^3>M/2$ and $y^2>M/2$, and
\[
P>\Bigl(\frac M2\Bigr)^{1/3}\Bigl(\frac M2\Bigr)^{1/2}|k|=\Bigl(\frac M2\Bigr)^{5/6}|k|.
\]
Put $e=1/(\kappa+\varepsilon)-5/6$, which is positive and at most $1/2$, and note that $3e=\gamma(\kappa+\varepsilon)$. Combined with $P<M^{1/(\kappa+\varepsilon)}$ and $M<2x^3$, the last estimate gives
\[
|k|<2^{5/6}M^{e}<2^{5/6+e}x^{\gamma(\kappa+\varepsilon)}<3x^{\gamma(\kappa+\varepsilon)}.
\]
Since $M<2x^3$ and $|k|<M/2$, only finitely many of these solutions have a given $x$, so $x$ tends to infinity along a subsequence of them. Put $\delta=\frac12\bigl(\gamma(\kappa)-\gamma(\kappa+\varepsilon)\bigr)$, which is positive. For $x^{\delta}\ge 3$ the bound gives $|k|<x^{\gamma(\kappa)-\delta}$, and the assumption of (b) fails for this $\delta$.
\end{proof}

\begin{remark}\label{rem:threequarters}
The lower bound $3/4$ in part \textup{(b)} is optimal. The solutions $(x,y,k)=(n,1,1-n^3)$ with $n\ge 2$ are primitive and satisfy $M=n^3$ and $P=n(n^3-1)$, so that $G_a$ tends to $3/4$. For $6/11\le\kappa<3/4$, this set satisfies the hypothesis of \textup{(b)}, since $|k|=n^3-1$ exceeds $x^{\gamma(\kappa)-\delta}$ for $n$ large, while infinitely many of its members satisfy $G_a>\kappa+\varepsilon$ for $\varepsilon<3/4-\kappa$.
\end{remark}

\begin{remark}\label{rem:critical}
Along a sequence of solutions with $x\to\infty$ and $\log|k|/\log x\to c$, where $c<3$, one has $y=x^{3/2+o(1)}$, and therefore
\[
G_a\longrightarrow\frac{3}{5/2+c}.
\]
Under these hypotheses the limit of the approximation gain determines the limiting Hall exponent and conversely, since $c\mapsto 3/(5/2+c)$ maps $[0,3)$ bijectively onto $(6/11,6/5]$ with inverse $\gamma$ of~\eqref{eq:gamma}. In particular, the limit $1$ corresponds to the exponent $1/2$. The correspondence concerns limits along such sequences. It does not hold for individual solutions, since $(2,3,1)$ has Hall exponent $0$ and $G_a=\log 9/\log 6\approx 1.2263$. Nor does a limit of $G_a$ alone determine the exponent, since the solutions of Remark~\ref{rem:threequarters} have $G_a\to 3/4$ while their exponent tends to $3$, outside the hypothesis $c<3$.
\end{remark}

\section{Consequences of the abc conjecture and of Szpiro's conjecture}\label{sec:abc}

For $\kappa\ge 1$, we say that the \emph{$abc$ inequality with exponent $\kappa$} holds for a set of coprime triples when, for every $\varepsilon>0$, only finitely many of them have quality above $\kappa+\varepsilon$. The $abc$ conjecture is the case $\kappa=1$ for all coprime triples~\cite{Oesterle1988Fermat}.

\begin{corollary}\label{cor:abcexponent}
Let $1\le\kappa<6/5$, and assume the $abc$ inequality with exponent $\kappa$ for the triples of Lemma~\ref{lem:mordelltriple}. Then, for every $\delta\in(0,\gamma(\kappa))$, only finitely many primitive solutions satisfy $|k|<x^{\gamma(\kappa)-\delta}$. For $\kappa=1$, in particular, the $abc$ conjecture implies Conjecture~\ref{conj:weakhall} for primitive solutions.
\end{corollary}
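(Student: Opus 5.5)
The plan is to reduce the corollary to Proposition~\ref{prop:kappa}(a), applied to the set $\mathcal S$ of primitive solutions. The only thing to check is that the hypothesis of part (a) follows from the $abc$ inequality, and Lemma~\ref{lem:mordelltriple} supplies this. For a primitive solution, the triple $x^3$, $|k|$, $y^2$ is coprime, and its quality $X$ satisfies $G_a\le X$ by~\eqref{eq:product}.

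First step: fix $\varepsilon>0$. Every primitive solution with $G_a>\kappa+\varepsilon$ has $X\ge G_a>\kappa+\varepsilon$. By the assumed $abc$ inequality with exponent $\kappa$, only finitely many triples of Lemma~\ref{lem:mordelltriple} have quality above $\kappa+\varepsilon$.

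To pass from triples back to solutions, I note that the map from primitive solutions to triples is finite-to-one. The triple determines $x^3$, $y^2$ and $|k|$ up to which entry is which. Hence $x$ and $y$ are determined up to finitely many choices, and $k=y^2-x^3$ is then determined. So only finitely many primitive solutions have $G_a>\kappa+\varepsilon$, which is exactly the hypothesis of Proposition~\ref{prop:kappa}(a). That part needs $6/11\le\kappa<6/5$, and this is satisfied because $1\le\kappa<6/5$. Part (a) then gives the stated conclusion for every $\delta\in(0,\gamma(\kappa))$.

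For $\kappa=1$, the $abc$ conjecture is the $abc$ inequality with exponent $1$ for all coprime triples, so in particular for these. Since $\gamma(1)=1/2$, the conclusion reads: for every $\delta\in(0,1/2)$, only finitely many primitive solutions satisfy $|k|<x^{1/2-\delta}$. For $\delta\ge 1/2$ we have $x^{1/2-\delta}\le 1\le|k|$, so there are no exceptions. This is Conjecture~\ref{conj:weakhall} restricted to primitive solutions.

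There is no real obstacle here. The only point needing care is the finite-to-one bookkeeping between triples and solutions, together with the remark that the lower bound $\kappa\ge1$ is there only because the $abc$ inequality is defined for $\kappa\ge 1$; part (a) itself allows smaller $\kappa$.
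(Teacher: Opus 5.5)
Your proposal is correct and follows the paper's proof essentially step for step. You use $G_a\le X$ from Lemma~\ref{lem:mordelltriple}, pass from triples back to primitive solutions by the finite-to-one assignment of terms to $x^3$, $y^2$, $|k|$, and then apply Proposition~\ref{prop:kappa}(a). Your extra check that $\delta\ge 1/2$ gives no exceptions when $\kappa=1$ is a harmless detail that the paper leaves implicit.
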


\begin{proof}
Each such triple arises from only finitely many primitive solutions, since an assignment of its terms to $x^3$, $y^2$ and $|k|$ determines the solution. For instance, the solutions $(2,3,1)$ and $(1,3,8)$ give the same triple. The assumption therefore leaves, for every $\varepsilon>0$, only finitely many primitive solutions with $X>\kappa+\varepsilon$, and by~\eqref{eq:product} only finitely many with $G_a>\kappa+\varepsilon$. Proposition~\ref{prop:kappa} concludes.
\end{proof}

\begin{remark}\label{rem:nonprimitive}
If the $abc$ inequality with exponent $\kappa$ is assumed for all coprime triples, then the corollary extends to all solutions. Let $g=\gcd(x^3,y^2)$, which divides $k$. The integers $x^3/g$, $|k|/g$ and $y^2/g$ form a coprime triple with largest term $M/g$, since $x^3/g$ and $y^2/g$ are coprime and $|k|/g$ is their difference, and its radical divides $\rad(xy|k|)$. For a prime $p$ dividing $g$, with $\alpha=v_p(x)$ and $\beta=v_p(y)$, both $\alpha$ and $\beta$ are positive, and $k=y^2-x^3$ gives $v_p(k)\ge\min\{3\alpha,2\beta\}=v_p(g)$, so that
\[
v_p(P)-v_p(g)=\alpha+\beta+\bigl(v_p(k)-\min\{3\alpha,2\beta\}\bigr)\ge\alpha+\beta\ge 2.
\]
 For a prime $p$ dividing $xyk$ but not $g$, one has $v_p(P/g)=v_p(P)\ge 1$. Hence $\rad(xy|k|)$ divides $P/g$, and in particular $\rad(xy|k|)\le P/g$, and the assumed inequality gives $M/g\le C_\varepsilon(P/g)^{\kappa+\varepsilon}$, that is $M\le C_\varepsilon P^{\kappa+\varepsilon}g^{1-\kappa-\varepsilon}\le C_\varepsilon P^{\kappa+\varepsilon}$, since $\kappa\ge 1$. For a prescribed $\tau>0$, choose $\varepsilon=\tau/2$. Then $G_a\le\kappa+\tau/2+\log C_{\tau/2}/\log P\le\kappa+\tau$ for sufficiently large $P$, and only finitely many solutions have bounded $P$. Proposition~\ref{prop:kappa}, applied to the set of all solutions, gives the conclusion of the corollary for all solutions, and for $\kappa=1$ the $abc$ conjecture gives Conjecture~\ref{conj:weakhall} in full. This extension matters, since the solution of Elkies with $\sqrt x/|k|\approx 46.6$~\cite{Elkies2000} has $\gcd(x,y)=3$.
\end{remark}

\begin{remark}\label{rem:szpiro}
The range $\kappa<6/5$ of Corollary~\ref{cor:abcexponent} is the range in which a quality bound gives a positive Hall exponent through $G_a\le X$. The known consequences of Szpiro's conjecture for the quality lie at its boundary and beyond. Assume Szpiro's conjecture in the form $|\Delta_E|\ll_\varepsilon N_E^{6+\varepsilon}$ for every $\varepsilon>0$ and every semistable elliptic curve $E$ over $\mathbb{Q}$, where $\Delta_E$ denotes the minimal discriminant and $N_E$ the conductor. This inequality gives the $abc$ inequality with exponent $3/2$ through the Frey curve, and with exponent $6/5$ through its quotients by subgroups of order $2$ (see~\cite[\S~I.2, Proposition~1 and the following remark, and \S~I.3]{Oesterle1988Fermat}).
\end{remark}

\begin{remark}\label{rem:normalized}
The quadratic approximation $y/x$ to $\sqrt x$ leads to the denominator product $P'=x^2y|k|$ in the construction of~\cite{MullerTaktikos2026}. This product also divides $x^3y^2|k|$ and has the same prime factors, and its power gain is related to that of $P$ by
\begin{equation}\label{eq:normalized-power}
G_p'=\frac{\log(x^2y|k|)}{\log N}=G_p+\frac{\log x}{\log N}.
\end{equation}
Along primitive solutions for which $y/x$ is a convergent of the irrational number $\sqrt x$, the $abc$ conjecture gives $G_p\to 1$ and $\log N\sim 3\log x$ by Question~\ref{q:gpa}, and therefore $G_p'\to 4/3$. For the product $P=xy|k|$, the Hall exponent is carried by the approximation gain, as Theorem~\ref{thm:hallgain} shows.
\end{remark}

\section{Small residuals and continued fractions}\label{sec:cf}

The following observation, due to the second author, describes the approximation side of the problem through the continued fraction of $\sqrt x$.

\begin{proposition}\label{prop:convergent}
Let $(x,y,k)$ be a primitive solution and put $u=(y+x^{3/2})/x$. If $|k|<u/2$, then $y/x$ is a convergent of the continued fraction of $\sqrt x$.
\end{proposition}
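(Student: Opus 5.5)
The plan is to apply Legendre's criterion: if $q\ge 1$ and $\gcd(p,q)=1$ with $|\alpha-p/q|<1/(2q^2)$, then $p/q$ is a convergent of $\alpha$. We take $\alpha=\sqrt x$, $p=y$ and $q=x$. Since the solution is primitive, $\gcd(y,x)=1$.

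The only thing to compute is the distance from $y/x$ to $\sqrt x$. We factor
\[
k=y^2-x^3=(y-x^{3/2})(y+x^{3/2}),
\]
which gives
\[
\frac yx-\sqrt x=\frac{y-x^{3/2}}{x}=\frac{k}{x(y+x^{3/2})}=\frac{k}{x^2u},
\]
with $u=(y+x^{3/2})/x>0$ as in the statement. Hence
\[
\Bigl|\sqrt x-\frac yx\Bigr|=\frac{|k|}{u\,x^2}.
\]
The hypothesis $|k|<u/2$ is then exactly the condition $|\sqrt x-y/x|<1/(2x^2)$ that Legendre's criterion requires.

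Some care is needed at the edges of the criterion. If $x$ is a perfect square, then $\sqrt x$ is rational. Legendre's theorem still holds for rational $\alpha$, provided "convergent" refers to either of the two continued fraction expansions of $\alpha$, and the strict inequality ensures $y/x\ne\sqrt x$ because $k\ne0$. If the paper means the irrational case, one simply notes this convention. When $x=1$, the denominator is $1$; the criterion still applies, since $y$ equals the floor or ceiling of $\sqrt x$ up to the standard convention for the first convergent.

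There is no real obstacle here. The only delicate points are citing the exact form of Legendre's theorem, with the strict inequality $<1/(2q^2)$ and coprimality of $p,q$, and handling the degenerate rational/square case and the $x=1$ convention. The computation itself is a single line.
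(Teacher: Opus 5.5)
Your argument is correct and is essentially the paper's proof: the factorization $k=(y-x^{3/2})(y+x^{3/2})$ gives $|\sqrt x-y/x|=|k|/(ux^2)<1/(2x^2)$, and Legendre's criterion with $\gcd(x,y)=1$ finishes. The one difference is the perfect-square case, which includes $x=1$: instead of appealing to Legendre's theorem for rational $\alpha$, the paper notes that the hypothesis can never hold there, since $\sqrt x$ is then an integer and $y/x\ne\sqrt x$ differs from it by at least $1/x\ge 1/(2x^2)$, so this one-line remark can replace your whole edge-case discussion.
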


\begin{proof}
From $k=(y-x^{3/2})(y+x^{3/2})$ one obtains
\begin{equation}\label{eq:residual}
\Bigl|\sqrt x-\frac yx\Bigr|=\frac{|k|}{x(y+x^{3/2})}=\frac{|k|}{ux^2}.
\end{equation}
If $|k|<u/2$, then the right-hand side is smaller than $1/(2x^2)$. If $x$ is a perfect square, then this is impossible, since $\sqrt x$ is then an integer and $y/x$ differs from it by at least $1/x$. Otherwise $\sqrt x$ is irrational, the fraction $y/x$ is reduced since $\gcd(x,y)=1$, and Legendre's criterion~\cite{HardyWright2008} shows that it is a convergent.
\end{proof}

\begin{proposition}\label{prop:quotient}
Let $(x,y,k)$ be a primitive solution for which $y/x$ is a convergent $p_n/q_n$ of $\sqrt x$, let $a=a_{n+1}$ be the next partial quotient, and put $u=(y+x^{3/2})/x$ as in Proposition~\ref{prop:convergent}. Then
\begin{equation}\label{eq:quotient-bounds}
\frac{u}{a+2}<|k|<\frac ua.
\end{equation}
\end{proposition}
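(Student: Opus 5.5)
Our plan is to combine the residual identity~\eqref{eq:residual} with the classical expression for the error of a convergent. We write $\alpha=\sqrt x$, which is irrational, since a perfect square $x$ admits no convergent of the form $y/x$ with $|k|<\infty$ small enough. More simply, if $x$ is a square then $\alpha$ has a finite expansion, and we would treat that degenerate case separately or exclude it. With $p_n/q_n=y/x$ and $\gcd(x,y)=1$ we have $q_n=x$, and the standard formula gives
\[
\Bigl|\alpha-\frac{p_n}{q_n}\Bigr|=\frac{1}{q_n(\alpha_{n+1}q_n+q_{n-1})},
\]
where $\alpha_{n+1}=[a_{n+1};a_{n+2},\dots]$ is the complete quotient. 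Comparing with~\eqref{eq:residual}, $|k|/(ux^2)=1/(x(\alpha_{n+1}x+q_{n-1}))$, and therefore
\[
|k|=\frac{u}{\alpha_{n+1}+q_{n-1}/x}.
\]

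The bounds then follow from two elementary estimates on the denominator $D=\alpha_{n+1}+q_{n-1}/q_n$. First, $\alpha_{n+1}>a$ because $\alpha$ is irrational, so the tail is nonzero. Also $q_{n-1}/q_n>0$ for $n\ge1$, so $D>a$ and $|k|<u/a$. Second, $\alpha_{n+1}<a+1$ and $q_{n-1}\le q_n$, so $D<a+2$ and $|k|>u/(a+2)$. Both inequalities are strict, which matches~\eqref{eq:quotient-bounds}.

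The main obstacle is the index bookkeeping at the start of the expansion. When $n=0$ we have $q_{-1}=0$, so $D=\alpha_1>a$ still holds and the argument goes through. The identification $q_n=x$ also needs care: since $y/x$ is in lowest terms, it must equal $p_n/q_n$ with $q_n=x$ exactly. We must also check that $p_n/q_n=y/x$ does not arise through the non-unique final representation of a rational number, but this cannot happen because $\alpha$ is irrational. Finally, the perfect square case has to be disposed of. There $\alpha$ is an integer, and the only convergent is $\alpha/1$, which would force $x=1$ and $y=1$, giving $k=0$, a contradiction. So the statement is vacuous in that case, and apart from it the proof is the direct computation above.
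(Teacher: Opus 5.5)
Your proof is correct and follows the paper's argument: you combine the residual identity~\eqref{eq:residual} with the convergent error formula, using $q_n=x$, to get $|k|=u/(\alpha_{n+1}+q_{n-1}/q_n)$, and then bound this denominator strictly between $a$ and $a+2$. You also treat the perfect-square and $n=0$ cases explicitly, which the paper leaves implicit; the muddled opening sentence about irrationality should be replaced by the clean perfect-square argument you give at the end.
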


\begin{proof}
Write $\alpha=\sqrt x$ and let $\alpha_{n+1}$ be its next complete quotient. The convergent identity~\cite{HardyWright2008} gives
\begin{equation}\label{eq:complete-quotient-error}
\left|\alpha-\frac{p_n}{q_n}\right|=\frac{1}{q_n^2(\alpha_{n+1}+q_{n-1}/q_n)}.
\end{equation}
Here $a<\alpha_{n+1}<a+1$ and $0\le q_{n-1}/q_n\le 1$. Since $q_n=x$, equation~\eqref{eq:residual} gives~\eqref{eq:quotient-bounds}.
\end{proof}

Along these convergents with $x\to\infty$, the estimate $|\sqrt x-y/x|<x^{-2}$ gives $u=2\sqrt x+O(x^{-2})$. Proposition~\ref{prop:quotient} therefore places $\sqrt x/|k|$ between $a/2$ and $(a+2)/2$, up to a factor tending to $1$. A primitive solution with $|k|<x^{1/2-\delta}$ and $x$ sufficiently large has, by Propositions~\ref{prop:convergent} and~\ref{prop:quotient}, a next partial quotient $a>x^{\delta}$. Conversely, if $y/x$ is a convergent with $a\ge x^{\delta}$, then $|k|<3x^{1/2-\delta}<x^{1/2-\delta/2}$ for sufficiently large $x$. For primitive solutions, weak Hall is therefore equivalent to the statement that, for every $\delta>0$, only finitely many convergents $p_n/q_n$ of $\sqrt x$ with $q_n=x$ and $p_n=y$ are followed by a partial quotient $a\ge x^{\delta}$. For instance, the continued fraction of $\sqrt{5234}$ begins $[72;2,1,7,1,5,2,2,5,1,7,\ldots]$, and the solution $5234^3+17=378661^2$ arises from the convergent $p_9/q_9=378661/5234$, followed by the partial quotient $a_{10}=7$.

\section{A question}\label{sec:questions}

\begin{question}\label{q:gpa}
Among primitive solutions for which $y/x$ is a convergent of $\sqrt x$, how does the power gain $G_p$ vary with the partial quotient $a$? By Lemma~\ref{lem:mordelltriple} and Proposition~\ref{prop:quotient}, the quality satisfies
\[
X=G_aG_p\qquad\text{with}\qquad\log P=3\log x-\log a+O(1)
\]
so a large partial quotient lowers $\log P$ and raises $G_a$, while radical compression raises $G_p$. A relation between $a$ and $G_p$ along such solutions would describe how the two gains share the quality of Hall-extremal triples. Under the $abc$ conjecture both gains are constrained along these solutions. Since $|k|<u$ and $u=2\sqrt x+O(x^{-2})$, one has $P=O(x^3)$ and hence $G_a\ge 1-O(1/\log x)$, while $X\le 1+o(1)$. The product formula then gives $G_p=1+o(1)$, and $G_a\le 1+o(1)$ gives $\log a=o(\log x)$. Assuming the $abc$ conjecture, the question therefore concerns the rate at which these limits are approached.
\end{question}

\section*{Acknowledgments}\label{sec:acknowledgments}

The authors thank Olivier Rozier for suggesting the investigation of Hall's conjecture via this method, and Noam D. Elkies for providing valuable feedback on extremal examples.

\bibliographystyle{amsalpha}
\bibliography{hall_approximation_gain}

\end{document}